%

\documentclass[twoside,12pt]{article}
\textwidth 16,5cm \textheight 23cm \topmargin -1cm \oddsidemargin 0.5cm \evensidemargin -0.5cm

\usepackage{color}

\usepackage{graphics}
\usepackage{graphicx}

\usepackage{amssymb,amsfonts}
\usepackage{amsmath}
\usepackage{amsthm}

\usepackage[table]{xcolor}


\usepackage[all]{xy}
\usepackage{tikz}
\usetikzlibrary{arrows,chains,matrix,positioning,scopes}

\tikzset{>=stealth',every on chain/.append style={join},
         every join/.style={->}}
\tikzstyle{labeled}=[execute at begin node=$\scriptstyle,
   execute at end node=$]
   
\usepackage{forest}
  
\tikzset{%
   highlight/.style={rectangle,rounded corners,fill=blue!15,draw,fill opacity=0.5
   ,thick,inner sep=-1pt, 
   }
 }

\def\hpt45{${\cal HPT}_{\!45}$}

\def\binomh#1#2{ \scalebox{.3}[1.2]{\textbf{)}}{\genfrac{}{}{0pt}{}{#1}{#2}}\scalebox{.3}[1.2]{\textbf{(}} }

\def\binomhh#1#2{ \scalebox{.4}[1.7]{\textbf{)}}{\genfrac{}{}{0pt}{}{#1}{#2}}\scalebox{.4}[1.7]{\textbf{(}} }

\newtheorem{theorem}{Theorem}
\newtheorem{lemma}{Lemma}[section]
\newtheorem{cor}{Corollary}
\newtheorem{rem}{Remark}

\pagestyle{myheadings}

\title{\bf Recurrence sequences in the hyperbolic Pascal triangle corresponding to the regular mosaic $\{4,5\}$ 
}

\author{L\'aszl\'o N\'emeth\footnote{Institute of Mathematics, University of West Hungary,   Sopron. \textit{nemeth.laszlo@nyme.hu}}, 
	L\'aszl\'o Szalay \footnote{Institute of Mathematics,University of West Hungary,   Sopron.  \textit{szalay.laszlo@nyme.hu}} \footnote{Department of Mathematics and Informatics, J. Selye University, Hradna ul. 21., 94501 Komarno, Slovakia.}}

\markboth{\centerline{L. N\'emeth, L. Szalay}}
{Recurrence sequences in the hyperbolic Pascal triangle ...}
\date{}


\begin{document}

\maketitle

\begin{abstract}
Recently, a new generalization of Pascal's triangle, the  so-called hyperbolic Pascal triangles were introduced. The mathematical background goes back to the regular mosaics in the hyperbolic plane. 
In this article, we investigate the paths in the hyperbolic Pascal triangle corresponding to the regular mosaic $\{4,5\}$, in which the binary recursive sequences $f_{n}=\alpha f_{n-1}\pm f_{n-2}$ are represented ($\alpha\in\mathbb{N}^+$). 
\smallskip

\noindent{\em Key Words: Pascal triangle, hyperbolic Pascal triangle, binary recurrences.}\\
{\em MSC code: 11B37, 05A10.}      
\end{abstract}

\section{Introduction}\label{sec:introduction}
 
In the hyperbolic plane there are an infinite number of types of regular mosaics (see, for example \cite{C}), they are assigned by Schl\"afli's symbol $\{p,q\}$, where the positive integers $p$ and $q$  satisfy $(p-2)(q-2)>4$. Each regular mosaic induces a so-called hyperbolic Pascal triangle (see \cite{BNSz}), following and generalizing the connection between the classical Pascal's triangle and the Euclidean regular square mosaic $\{4,4\}$.
For more details see \cite{BNSz}, but here we also collect some necessary information. 

There are several approaches to generalize Pascal's arithmetic triangle (see, for instance \cite{BSz}).
The hyperbolic Pascal triangle based on the mosaic $\{p,q\}$ can be figured as a digraph, where the vertices and the edges are the vertices and the edges of a well defined part of the lattice $\{p,q\}$, respectively, further each vertex possesses a value, say label, giving the number of different shortest paths from the fixed base vertex. Figure~\ref{fig:Pascal_layer6} illustrates the hyperbolic Pascal triangle linked to $\{p,q\}=\{4,5\}$. 
Generally, for $\{4,q\}$, the quadrilateral shape cells surrounded by appropriate edges are corresponding to the squares in the mosaic. The base vertex has two edges (both are outgoing), the leftmost and the rightmost vertices have three (one ingoing and two outgoing), the others have $q$ edges (either two ingoing and $q-2$ outgoing (type $A$) or one ingoing and $q-1$ outgoing (type $B$)). In other words, apart from the winger elements, vertices of type $A$ have two ascendants and $q-2$ descendants, vertices of type $B$ do one ascendant and $q-1$ descendants. In the figures, we denote the $A$-type vertices by red circle and $B$-type vertices by cyan diamond, further the wingers by white diamond. 
The vertices having distance $n$ from the base vertex are located in row $n$. 
The general method of drawing is the following. Going along the vertices of the $n^{th}$ row, according to type of the elements (winger, $A$, $B$), we draw appropriate number of edges downward (2, $q-2$, $q-1$, respectively). Neighbor edges of two neighbor vertices of the $n^{th}$ row meet in the $(n+1)^{th}$ row, constructing a vertex of type $A$. The other descendants of row $n$ in row $n+1$ have type $B$, except the two wingers.
In the sequel, $\binomh{n}{k}$ denotes the $k^\text{th}$ element in row $n$, which is either the sum of the labels of its two ascendants or coincide the label of its unique ascendant. For instance, if $\{p,q\}=\{4,5\}$, then
$$\binomhh{4}{6}=5=2+3=\binomhh{3}{2}+\binomhh{3}{3} \qquad {\rm and} \qquad \binomhh{4}{5}=2=\binomhh{3}{2}$$ 
hold (see Figure~\ref{fig:Pascal_layer6}). We note, that the hyperbolic Pascal triangle has the property of vertical symmetry.

\begin{figure}[h!]
 \centering
 \includegraphics[width=0.99\linewidth]{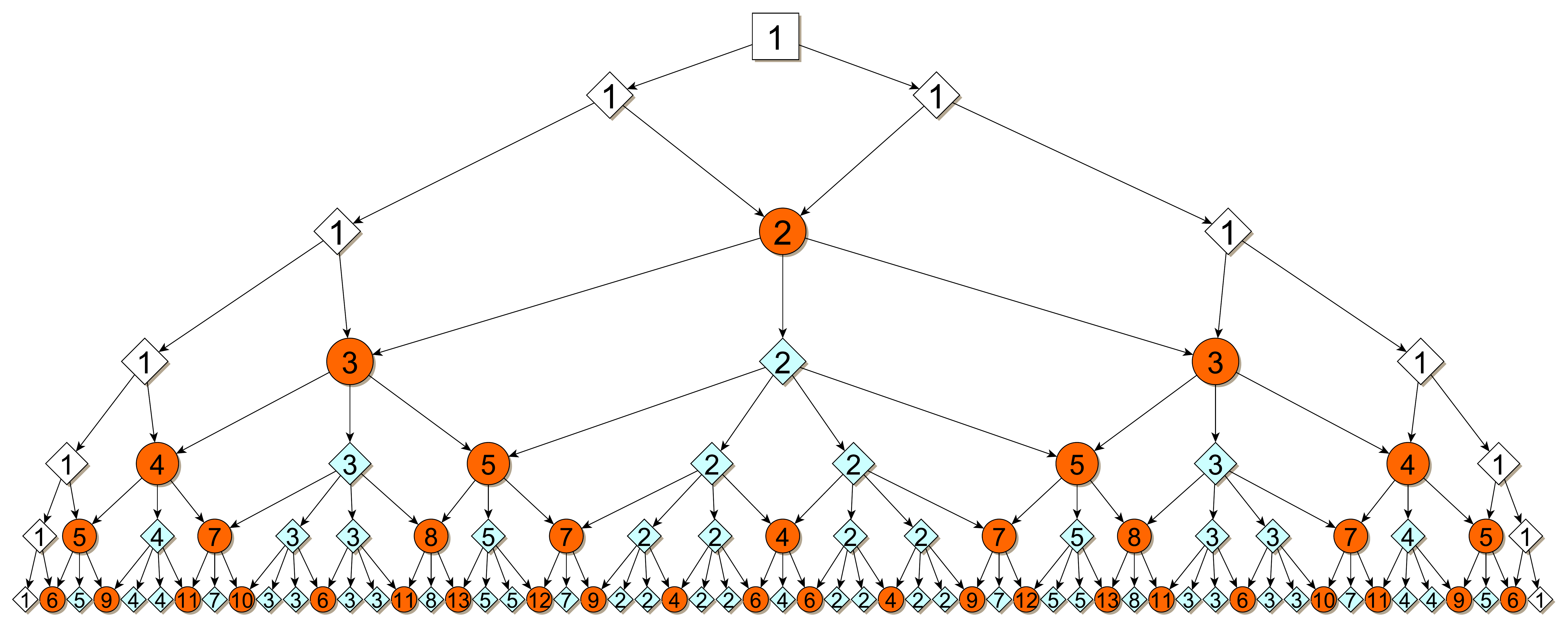}
 \caption{Hyperbolic Pascal triangle linked to $\{4,5\}$ up to row 6}
 \label{fig:Pascal_layer6}
\end{figure}

\section{Recurrence sequences linked to $\{4,5\}$}

Let $\{p,q\}=\{4,5\}$ be fixed, further we let \hpt45  denote the hyperbolic Pascal triangle corres\-ponding to the mosaic $\{4,5\}$. It was showed in \cite{BNSz} that all the  binary recurrence sequences $(f_i)_{i\geq0}$  which are defined by 
\begin{equation} \label{seq:+}
f_{i}=\eta f_{i-1}+f_{i-2}, \quad (n\geq 2),
\end{equation}
where $\eta$ and  $f_0<f_1$ are positive integers, appear in \hpt45.

In the following we describe paths corresponding to further positive integer binary recurrence sequences. We remark that although we restrict ourselves to \hpt45, the methods and the results have been worked out can be fitted to other hyperbolic Pascal triangles with $p=4,~q\ge6$.

Taking a vertex of type $A$ in row $n$, it has exactly two descendants of type $A$ in the row $n+1$. In order to reach and distinguish them, we denote the left-down step and right-down step (along the appropriate edge of the graph) by $L$ and $R$, respectively. For the sake of brevity, the sequence of $\ell+r$ consecutive steps
\begin{equation*} 
\underbrace{LL\cdots L}_{\ell}\underbrace{RR\cdots R}_{r} 
\end{equation*}
will be denoted by $L^\ell R^r$. Till the end of this work, such a path is always considered on vertices of type $A$. Generally, we are interested in the labels of these vertices, therefore sometimes we call them elements (as the elements or terms of a sequence), but if it is necessary we determine the location of the element, too. 

This paper will use the next theorem (Theorem 5 in \cite{BNSz}), which states that any two positive integers can be found next to each other somewhere in \hpt45.

\begin{theorem}\label{thm:uv}
Given $u,v\in \mathbb{N}^+$, then there exist $n,k\in \mathbb{N}^+$ such that $u=\binomh{n}{k}$ and $v=\binomh{n}{k+1}$. 
\end{theorem}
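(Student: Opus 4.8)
The plan is to track how a pair of \emph{adjacent labels} propagates downward in \hpt45, and then run a descent on it which is, in disguise, the subtractive Euclidean algorithm, organised as strong induction on $u+v$.

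\textbf{Step 1 (propagation of adjacent pairs).} First I would establish the local fact: if $u'=\binomh{n}{k}$ and $v'=\binomh{n}{k+1}$ stand next to each other in row $n$, then in row $n+1$ the three labels $u'$, $u'+v'$, $v'$ occur consecutively, say $u'=\binomh{n+1}{k'}$, $u'+v'=\binomh{n+1}{k'+1}$, $v'=\binomh{n+1}{k'+2}$ for a suitable $k'$. This is read off from the drawing rule recalled in the Introduction: the two neighbouring edges descending from the vertices labelled $u'$ and $v'$ meet in row $n+1$ at a type-$A$ vertex whose label is $u'+v'$; immediately to its left sits either a type-$B$ descendant of the $u'$-vertex (hence labelled $u'$) or, if the $u'$-vertex is the left winger, the new left winger (also labelled $1=u'$); and symmetrically on the right. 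A short case distinction on the types of the two vertices (interior type $A$, interior type $B$, winger) together with the corner cases covers every configuration.

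\textbf{Step 2 (the pairs $(c,c)$).} Next I would show that for every $c\in\mathbb N^+$ the pair $(c,c)$ occurs as two adjacent labels somewhere. Along the near-boundary diagonal one has $\binomh{n}{1}=\binomh{n-1}{0}+\binomh{n-1}{1}=1+\binomh{n-1}{1}$, hence $\binomh{n}{1}=n$; taking $n=c\ge 2$ shows that $c$ is the label of the interior (type-$A$) vertex $\binomh{c}{1}$. An interior vertex has a type-$B$ descendant carrying the same label, and a type-$B$ vertex has two type-$B$ descendants, which by the construction rule occupy consecutive positions (all type-$B$ children of a vertex lie between its two neighbouring type-$A$ meeting vertices, with nothing else in between). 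Thus a type-$B$ vertex labelled $c$ forces an adjacent pair $(c,c)$ in the following row. For $c=1$ the pair $(1,1)$ is already present in row $1$.

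\textbf{Step 3 (induction on $u+v$).} If $u=v$, Step 2 gives the claim. If $u<v$, apply the induction hypothesis to $(u,v-u)$ (both entries positive, strictly smaller sum) to get $u=\binomh{n}{k}$, $v-u=\binomh{n}{k+1}$; Step 1 then yields $u=\binomh{n+1}{k'}$ and $v=u+(v-u)=\binomh{n+1}{k'+1}$. If $u>v$, apply the induction hypothesis to $(u-v,v)$ and use the other half of Step 1. The base case $u+v=2$ is $(1,1)$, handled above. The main obstacle is the bookkeeping in Step 1 — verifying that the label immediately left (resp.\ right) of the type-$A$ meeting vertex is genuinely $u'$ (resp.\ $v'$) in \emph{every} configuration, in particular when one of the two vertices is a winger and when the pair straddles the axis of vertical symmetry; Steps 2 and 3 are then routine.
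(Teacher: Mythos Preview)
The paper does not prove this statement at all: it is quoted verbatim as Theorem~5 of \cite{BNSz} and used as a black box, so there is no in-paper argument to compare yours against.

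That said, your proof is correct and is in fact the natural (and presumably the original) argument. Step~1 is exactly the local propagation rule of \hpt45: the rightmost descending edge of the vertex labelled $u'$ and the leftmost descending edge of the vertex labelled $v'$ meet in a type-$A$ vertex labelled $u'+v'$, and since every non-winger vertex in $\{4,5\}$ has at least three descendants (and each winger has two, the outer one being the new winger with the same label~$1$), the vertex immediately to the left of this meeting point is a child of the $u'$-vertex alone, hence labelled $u'$, and symmetrically on the right. Step~2 uses precisely the feature $q=5$ provides, namely that a type-$B$ vertex has $q-1=4$ descendants, of which the two middle ones are consecutive type-$B$ children carrying the parent's label; together with $\binomh{n}{1}=n$ this manufactures the diagonal pairs $(c,c)$. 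Step~3 is then the subtractive Euclidean algorithm, exactly as you say. The only delicate point worth flagging is the boundary case $u=v=1$: the sole vertices labelled $1$ in \hpt45\ are the wingers (wingers have no type-$B$ children, and every type-$A$ label is at least $2$), so the unique adjacent pair $(1,1)$ is $\binomh{1}{0},\binomh{1}{1}$, which forces $k=0$. This is a quirk of the \emph{statement} (the constraint $k\in\mathbb N^+$) rather than of your method.
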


Using Theorem~\ref{thm:uv}, Corollary~\ref{cor:uv} provides an immediate consequence of the properties of \hpt45. 

\begin{cor}\label{cor:uv}
If $u=\binomh{n}{k}<v=\binomh{n}{k+1}$ holds for some positive integers $u$ and $v$, then $\binomh{n}{k+2}=v-u$, moreover the type of $\binomh{n}{k+1}$ is $A$,  while the types of $\binomh{n}{k}$ and $\binomh{n}{k+2}$ are not $A$ (i.e.,~either $B$ or winger). 
\end{cor}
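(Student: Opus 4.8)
The plan is to read the structure of \hpt45\ near row $n$ directly off the construction rules and the vertical symmetry. First I would recall how labels propagate: a vertex of type $A$ is the meeting point of two neighboring edges from row $n-1$, so its label is the sum of the labels of its two ascendants; a vertex of type $B$ (or a winger) has a unique ascendant and inherits its label. Conversely, reading a row from left to right, the subsequence of entries equals the sequence of partial information coming down from row $n-1$; in particular, between any two consecutive $A$-vertices in a row there is exactly one $B$-vertex (this is precisely the ``neighbor edges meet in an $A$, the other descendants are $B$'' rule). So in a row the types alternate in the pattern $\ldots A\, B\, A\, B\, A\ldots$ away from the two wingers.

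Next I would use this alternation together with Theorem~\ref{thm:uv}. We are given $u=\binomh{n}{k}<v=\binomh{n}{k+1}$. The three consecutive entries $\binomh{n}{k},\binomh{n}{k+1},\binomh{n}{k+2}$ cannot have two $A$-types among any two consecutive ones, hence at most one of the three is of type $A$; and since between the positions $k$ and $k+2$ there must be at least one $A$, the middle one $\binomh{n}{k+1}$ is forced to be of type $A$, while $\binomh{n}{k}$ and $\binomh{n}{k+2}$ are of type $B$ or winger. This already gives the second assertion. For the first assertion I would trace ascendants: being type $A$, $\binomh{n}{k+1}$ has two ascendants in row $n-1$, say with labels $a$ and $b$, and $v=a+b$. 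The right ascendant of $\binomh{n}{k}$ (a $B$ or winger) is the left ascendant of $\binomh{n}{k+1}$, and the left ascendant of $\binomh{n}{k+2}$ is the right ascendant of $\binomh{n}{k+1}$; that is, $u=a$ is inherited from the first, so $b=v-u$, and $\binomh{n}{k+2}$ inherits $b=v-u$. Hence $\binomh{n}{k+2}=v-u$.

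The main obstacle I anticipate is being careful at the boundary: the statement should be guaranteed to live strictly inside the triangle, away from the winger columns, so that ``the entry to the right of $\binomh{n}{k}$'' and ``the entry to the left of $\binomh{n}{k+2}$'' genuinely have the ascendant structure just described, and so that $\binomh{n}{k+2}$ actually exists. Here Theorem~\ref{thm:uv} is the right tool: by vertical symmetry, if a pair $(u,v)$ occurs somewhere it occurs with $u$ on the left and also (in the mirror image) with $u$ on the right, so we may place the pair with at least one further column available to the right; and since $u<v$ with $v$ of type $A$, the ascendant bookkeeping forces $v-u\ge 1$, keeping us off the wingers on that side as well. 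Once the position is chosen safely inside, the argument above is just the local edge-and-label rules, so I would only need to spell out the ascendant adjacencies once and then conclude.
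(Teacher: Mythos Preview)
Your label--tracing in the second paragraph is fine once the types are known, but the argument you give for the types has a real gap. In \hpt45\ the row pattern is \emph{not} a strict alternation $\ldots A\,B\,A\,B\,A\ldots$. A type-$A$ vertex in row $n$ has $q-2=3$ descendants (pattern $A\,B\,A$ below it), but a type-$B$ vertex has $q-1=4$ descendants (pattern $A\,B\,B\,A$ below it). Hence between two consecutive $A$'s in a row there may be either one or two $B$'s; already in row~4 one sees two adjacent $B$'s. Consequently your sentence ``hence at most one of the three is of type $A$'' is false (the pattern $A\,B\,A$ occurs), and ``the middle one is forced to be of type $A$'' does not follow from position alone (the pattern $A\,B\,B$ also occurs).

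What actually pins down the types is the hypothesis $u<v$, which you never use for this purpose. Run through the possible type pairs for $\binomh{n}{k},\binomh{n}{k+1}$: two consecutive $A$'s are impossible; two consecutive $B$'s share a single ascendant, giving $u=v$; if $\binomh{n}{k}$ is $A$ and $\binomh{n}{k+1}$ is $B$, then $\binomh{n}{k+1}$ equals the right ascendant of $\binomh{n}{k}$, so $u>v$. The only way to get $u<v$ is therefore $\binomh{n}{k}$ of type $B$ (or winger) and $\binomh{n}{k+1}$ of type $A$. Then $\binomh{n}{k+2}$, being the right neighbour of an $A$, is not $A$, and your ascendant bookkeeping gives $\binomh{n}{k+2}=v-u$. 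This is presumably what the paper means by ``immediate consequence of the properties of \hpt45''; you just need to route the argument through the inequality rather than through a nonexistent strict alternation.
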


\begin{rem}\label{rem1} Clearly, by the symmetry we also have the construction $u=\binomh{n}{k}>v=\binomh{n}{k+1}$ and $\binomh{n}{k-1}=u-v$. Further, the type of $\binomh{n}{k}$ is $A$.
\end{rem} 

\subsection{Recurrence sequences and paths}

Let $(f_i)_{i\geq0}$ be a recurrence sequence defined by
\begin{equation}\label{seq:-}
f_{i}=\alpha f_{i-1}- f_{i-2},\qquad (i\geq2),
\end{equation} 
where $\alpha\in \mathbb{N}^+$, $\alpha \geq2$,  and  $f_0<f_1$ are positive integers with $\gcd(f_0,f_1)=1$. If $\alpha=2$ then $(f_i)_{i\geq0}$  is an arithmetic progression given by $f_i=f_{i-1}+(f_1-f_0)$.  

From Theorem \ref{thm:uv} and Corollary \ref{cor:uv} we know that in case of any positive integers $f_0<f_1$, there exist an element in \hpt45 with value $f_1$, and with neighbors in the same row valued by $f_0$ and $f_1-f_0$. 
In Theorem~\ref{thm:seq01} we give a  path in \hpt45 (analogously to Theorem 6 in \cite{BNSz}) contains all the elements of \eqref{seq:-}. 

\begin{theorem}\label{thm:seq01}
There exists a path in \hpt45 crossing vertices of type $A$, such that the vertices are labelled with the terms of $(f_i)_{i\geq1}$ as follows. Assume that $\binomh{n}{k}=f_1$, and $\binomh{n}{k-1}=f_1-f_0$. Then the first element of the path is $f_1$ and the pattern of the steps from $f_{i-1}$ to $f_i$ $(i\geq2)$ is 
$LR^{\alpha-2}$.
\end{theorem}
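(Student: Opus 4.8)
The plan is a straightforward induction along the path, carrying a local invariant on the three consecutive labels around the current vertex. Write $v_i$ for the vertex of the path labelled $f_i$; the invariant will be: \emph{$v_i$ is of type $A$, and its left-hand neighbour in the same row is labelled $f_i-f_{i-1}$.} By Corollary~\ref{cor:uv} applied to the pair $f_i-f_{i-1}<f_i$ this automatically forces the right-hand neighbour of $v_i$ to be labelled $f_{i-1}$ and both neighbours to be of type not~$A$, and it reconfirms that $v_i$ is of type~$A$. Before starting I would record the local arithmetic of \hpt45 used throughout: any two adjacent vertices have a unique common descendant, it is of type~$A$, and its label is the sum of the two parent labels; the leftmost (resp.\ rightmost) descendant of a type-$A$ vertex is precisely its common descendant with the left (resp.\ right) neighbour, so the steps $L$ and $R$ always move between type-$A$ vertices; and every descendant other than these common ones is of type~$B$ and repeats the label of its parent, there being exactly one such middle descendant for a type-$A$ parent and exactly two for a type-$B$ parent.

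\emph{Base case $i=1$.} By hypothesis $\binomh{n}{k}=f_1$, $\binomh{n}{k-1}=f_1-f_0$, and $f_1-f_0<f_1$; Corollary~\ref{cor:uv} gives $\binomh{n}{k+1}=f_0$ and that $v_1:=\binomh{n}{k}$ is of type~$A$, which is the invariant with $f_0$ in the role of $f_{i-1}$. At this point I would also note, once and for all, that $(f_i)$ is strictly increasing with positive terms: $f_2-f_1=(\alpha-1)f_1-f_0\ge f_1-f_0>0$, and inductively $f_{i+1}-f_i=(\alpha-1)f_i-f_{i-1}\ge f_i-f_{i-1}>0$, using $\alpha\ge2$ and $f_0<f_1$; hence every later application of Corollary~\ref{cor:uv} to a pair $f_i-f_{i-1}<f_i$ is legitimate.

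\emph{Inductive step.} From $v_i$ with the invariant I would trace the block $LR^{\alpha-2}$ step by step, tracking the label and the relevant neighbour. The $L$-step lands on the common descendant of the left neighbour ($f_i-f_{i-1}$) and $v_i$ ($f_i$): a type-$A$ vertex of label $2f_i-f_{i-1}$ whose right neighbour is the middle (type-$B$) descendant of $v_i$, of label $f_i$. Each of the following $\alpha-2$ $R$-steps passes to the common descendant of the current vertex and its right neighbour of label $f_i$, so it adds $f_i$ to the label, and the fresh right neighbour is again a type-$B$ vertex of label $f_i$ (a middle descendant of the previous right neighbour); thus the recursion keeps going. After the full block the label is $2f_i-f_{i-1}+(\alpha-2)f_i=\alpha f_i-f_{i-1}=f_{i+1}$, the endpoint is a common descendant hence of type~$A$, and its left neighbour is labelled $f_{i+1}-f_i$: for $\alpha\ge3$ that left neighbour is the middle descendant of the last type-$A$ vertex visited, of label $(\alpha-1)f_i-f_{i-1}=f_{i+1}-f_i$; for $\alpha=2$ the block is just $L$, and the left neighbour of $v_{i+1}$ is a descendant of the vertex $f_i-f_{i-1}$, of label $f_i-f_{i-1}=f_{i+1}-f_i$. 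This re-establishes the invariant for $i+1$, and reading the labels along the whole path yields $f_1,f_2,\dots$ as asserted.

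The part needing the most care is the bookkeeping at the ends of a block. One must make sure that at each $R$-step the right neighbour genuinely is a type-$B$ vertex of label $f_i$ and not, say, a winger (this is automatic because after the first $L$-step the right neighbour is an \emph{internal} middle descendant, whose parent is of type~$A$ or~$B$, never a winger); and in the degenerate case $\alpha=2$ one must read off the left neighbour of $v_{i+1}$ correctly when the vertex $f_i-f_{i-1}$ happens to be a winger — then it has label $1$, which forces $f_i-f_{i-1}=1$, and its new winger descendant also carries $1=f_{i+1}-f_i$, so the invariant survives unchanged. Everything else is the routine label arithmetic above. (The hypothesis $\gcd(f_0,f_1)=1$ is not needed for this argument.)
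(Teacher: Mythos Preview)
Your proof is correct and follows essentially the same approach as the paper: trace the $LR^{\alpha-2}$ block from $f_{i-1}$ to $f_i$ step by step, keeping track of the neighbour labels (the paper does this via a figure, you do it via an explicit invariant). Your write-up is in fact more careful than the original --- you explicitly verify monotonicity of $(f_i)$, handle the winger edge case for $\alpha=2$, and check that the right neighbour stays type~$B$ throughout the $R$-steps --- but the underlying argument is identical.
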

\begin{proof}
According to Theorem \ref{thm:uv}, any $f_1$ and $f_1-f_0$ can be neighbours in \hpt45, where type of $f_1$ is $A$ (and the type of $f_1-f_0$ is not $A$).

If $\alpha=2$, then the statment is easy to show, since no $R$ steps. Indeed, the difference of an element type $A$, and its immediate left descendant having type $A$ is the constant $f_1-f_0$. 

Assume now $\alpha\ge3$.
 By the construction rule of \hpt45, we can follow the way from any $f_{i-1}$ to $f_i$ $(i\geq2)$ in Figure~\ref{fig:steps01-f}, which justifies the theorem (the type of the rectangle shaped elements is $A$). In the last row of the figure we use, among others,
the equality $f_i-f_{i-1}=(\alpha-1)f_{i-1}-f_{i-2}$.
\end{proof}

\begin{figure}[h!] \centering
\scalebox{0.99}{ \begin{tikzpicture}[->,xscale=1.5,yscale=1.5, auto,swap]
  \node(a0) at (0,0)    {$f_{i-1}-f_{i-2}$};
  \node (a1) at (0.5,-1) [shape=rectangle,draw,fill=red!5] {$2f_{i-1}-f_{i-2}$};
  \node (a2) at (1,-2)  [shape=rectangle,draw,fill=red!5] {$3f_{i-1}-f_{i-2}$};
  \node (a3) at (1.5,-3) {$\vdots $};
  \node (a4) at (2,-4)  [shape=rectangle,draw,fill=red!5] {$(\alpha-1)f_{i-1}-f_{i-2}$};
  \node (a5) at (2.5,-5)  [shape=rectangle,draw,fill=red!30] {$f_{i}=\alpha f_{i-1}-f_{i-2}$};
  \node (b0) at (2,0)   [shape=rectangle,draw,fill=red!30] {$f_{i-1}$};
  \node (b1) at (2.5,-1) {$f_{i-1}$};
  \node (b2) at (3,-2)   {$f_{i-1}$};
  \node (b3) at (3.5,-3) {$\vdots$};
  \node (b4) at (4,-4) {$f_{i-1}$};
  \node (b5) at (4.5,-5) {$f_{i-1}$};
  \node (c0) at (0,-5) {$f_{i}-f_{i-1}$};
  \node (d0) at (4,0) {$f_{i-2}$};
     
  \path (a0) edge node {} (a1) ;
  \path (a1) edge node {} (a2) [very thick];
  \path (a2) edge node {} (a3) [very thick];
  \path (a3) edge node {} (a4) [very thick];
  \path (a4) edge node {} (a5) [very thick];
  \path (b0) edge node {} (b1);
  \path (b1) edge node {} (b2);
  \path (b2) edge node {} (b3);
  \path (b3) edge node {} (b4);
  \path (b4) edge node {} (b5);
  \path (a4) edge node {} (c0);
  \path (b0) edge node {} (a1) [very thick];   
  \path (b1) edge node {} (a2);                       
  \path (b2) edge node {} (a3);   
  \path (b3) edge node {} (a4);  
  \path (b4) edge node {} (a5);  
\end{tikzpicture}}
  \caption{Path $LR^{\alpha-2}$ between $f_{i-1}$ and $f_i$}
  \label{fig:steps01-f}
\end{figure}
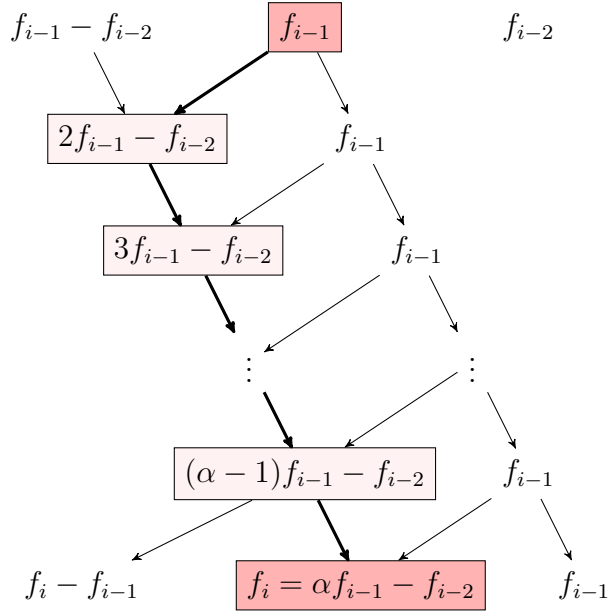

\begin{rem}
Theorem \ref{thm:seq01} can be extended for the whole sequence $(f_i)_{i\geq0}$  if and only if $(\alpha-1)f_0<f_1<\alpha f_0$. Under these conditions one can follow the path back from the bottom of Figure~\ref{fig:steps01-f} to the top, from $f_1$ to $f_0$.
\end{rem}

The path showed on the right hand side of Figure~\ref{fig:Pascal_4f-f} (cf.~Figure \ref{fig:Pascal_layer6}) is an example for the binary recurrence $f_{i}=4f_{i-1}-f_{i-2}$ with $f_0=1$, $f_1=2$. 

Theorem~\ref{thm:seq01} finds a path to the sequence~\eqref{seq:-}. Considering the opposite direction, now we decribe the sequence corresponding to a given pattern of steps. The expression ``corner element'' means a labelled vertex where the direction of the sequence of steps changes. For example, the first corner element of the path $L^3R^2$ is the vertex reached after three left steps, the second corner element comes after further two right steps, etc.

\begin{theorem}\label{th:lr}
Suppose that the A-type vertex $\binomh{n}{k}=U_1=u_1$ is a starting point of the path $L^\ell R^r$. We let $U_i$, and $u_i$ ($i=1,\dots$) denote the label of the corner elements, and the label of every second corner elements of the path, respectively. Then we have   
\begin{equation} \label{eq:lr}
u_{i}=(\ell r+2)u_{i-1}-u_{i-2},\qquad\qquad (i\ge3).
\end{equation}
Moreover, if $\ell=r$, then  
\begin{equation*} 
U_{i}=\ell U_{i-1}+U_{i-2},\qquad\qquad (i\ge3).
\end{equation*}
\end{theorem}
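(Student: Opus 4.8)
The plan is to track how the labels evolve along a single $L^\ell R^r$ leg of the path, using the local rule of \hpt45\ (type $A$ vertices are sums of their two ascendants, type $B$ vertices copy their unique ascendant), exactly in the spirit of Figure~\ref{fig:steps01-f}. First I would fix one corner element $U_{i-1}$ of type $A$ sitting next to a non-$A$ neighbour; by Corollary~\ref{cor:uv} (and Remark~\ref{rem1}) the two in-row neighbours of $U_{i-1}$ are $U_{i-1}$ itself together with a value that, combined with the ascendant structure, lets me compute the labels encountered after $1,2,\dots,\ell$ consecutive $L$-steps, and then after the $r$ subsequent $R$-steps, in closed form in terms of $U_{i-1}$ and the \emph{previous} corner value $U_{i-2}$. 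The key bookkeeping fact is that along a run of $L$'s (resp.~$R$'s) the side column stays constant at a single value (the analogue of the ``$f_{i-1}$ column'' in Figure~\ref{fig:steps01-f}), so each step adds that constant; the value of that constant column is itself an affine expression in $U_{i-1}$ and $U_{i-2}$ coming from the turn at the corner. Carrying this out gives $U_i = a U_{i-1} + b U_{i-2}$ for explicit small integers $a,b$ depending on $\ell,r$; more precisely I expect the bend at a corner to contribute the ``$+2$'' and the two straight runs to contribute the product $\ell r$, yielding the second-order relation with characteristic coefficient $\ell r + 2$ once one passes to \emph{every second} corner $u_i$ (because alternate corners play symmetric but distinct roles, an $L$-corner versus an $R$-corner).

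Concretely, the two-term recursion for the full corner sequence $(U_i)$ will not be homogeneous of a single clean form unless $\ell=r$, which is why the theorem states \eqref{eq:lr} only for the sub-sequence $u_i=U_{2i-1}$ (or $U_{2i}$). So the main computational step is: express $U_{i}$ and $U_{i+1}$ in terms of $U_{i-1}$ (one full $L^\ell$ then $R^r$ traversal, and then the next one), eliminate the intermediate odd/even corner, and read off the relation $u_i=(\ell r+2)u_{i-1}-u_{i-2}$. For the special case $\ell=r$ the $L$-run and the $R$-run are interchangeable, the asymmetry between consecutive corners disappears, and the one-step map on $(U_{i-1},U_i)$ already has the shape $U_i=\ell U_{i-1}+U_{i-2}$ — this is essentially the observation that a symmetric zig-zag of amplitude $\ell$ reproduces the ``$\{4,q\}$ with parameter $\ell$'' recursion of \eqref{seq:+}, and can be seen directly from the same figure by setting $r=\ell$ and noting the column-constant becomes $\ell U_{i-1}-U_{i-2}+U_{i-1}$-type combinations collapse.

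I would organise the write-up as: (1) a lemma computing the label after $t$ steps along a straight $L$-run starting from a type-$A$ corner with a given non-$A$ neighbour, via induction on $t$ using only the sum/copy rule; (2) the analogous statement for an $R$-run, obtained by the vertical symmetry of \hpt45; (3) composition of an $L^\ell$ leg with an $R^r$ leg to get $U_i$ in terms of $U_{i-1},U_{i-2}$; (4) elimination to obtain \eqref{eq:lr}; (5) the specialisation $\ell=r$. The main obstacle I anticipate is purely organisational rather than deep: correctly identifying, at each corner, which neighbour is the ``carried'' non-$A$ value and keeping the signs straight when the direction reverses (the turn from $R$ back to $L$ versus $L$ to $R$), since Corollary~\ref{cor:uv} is stated for an increasing pair $u<v$ and Remark~\ref{rem1} for the decreasing case — one must invoke the right one at each bend. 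Once the straight-run lemma is in place, steps (3)–(5) are routine linear algebra in $2\times2$ matrices: the transfer matrix of an $L$-run is $\left(\begin{smallmatrix}1&0\\ \ell&1\end{smallmatrix}\right)$-like, that of an $R$-run its transpose, their product has trace $\ell r+2$, and Cayley–Hamilton delivers \eqref{eq:lr} immediately.
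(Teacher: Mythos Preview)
Your plan is essentially the paper's own proof: the paper introduces the auxiliary $v_i$ (the label of the vertex on the path immediately preceding the corner $u_i$ --- exactly your ``carried non-$A$ value''), reads off from a single $L^\ell R^r$ traversal the $2\times2$ system
\[
u_{i+1}=(r+1)u_i+\bigl(\ell+r(\ell-1)\bigr)v_i,\qquad v_{i+1}=ru_i+\bigl(\ell+(r-1)(\ell-1)\bigr)v_i,
\]
and then applies Lemma~\ref{lemma:2seq}, which is precisely Cayley--Hamilton for this matrix (trace $\ell r+2$, determinant $1$); the case $\ell=r$ uses the half-leg system $U_{i+1}=U_i+\ell V_i$, $V_{i+1}=U_i+(\ell-1)V_i$ in the same way. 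Your transfer-matrix formulation in the final paragraph is exactly this computation, so the only streamlining available is to commit from the outset to the state $(u_i,v_i)$ rather than detouring through attempts to express everything via $U_{i-2}$.
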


Obviously, $u_i=U_{2i-1}$ holds. The proof of Theorem \ref{th:lr} applies the following lemma (see \cite{BNSz}, Remark 1 linked to Lemma 4).

\begin{lemma}\label{lemma:2seq}
Let $x_0$, $y_0$, further $a_j$ and $b_j$ ($j=1,2$) be complex numbers such that $a_2b_1\ne0$. Assume that
for $i\ge i_0$  the terms of the sequences $(x_i)$ and $(y_i)$ satisfy
\begin{eqnarray*}
x_{i+1}&=&a_1x_i+b_1y_i,\\
y_{i+1}&=&a_2x_i+b_2y_i.
\end{eqnarray*}
Then for both sequences
\begin{equation*}\label{nomix}
z_{i+2}=(a_1+b_2)z_{i+1}+(-a_1b_2+a_2b_1)z_{i}
\end{equation*}
holds ($i\ge i_0$).
\end{lemma}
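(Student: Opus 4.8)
The plan is to decouple the linear system by eliminating one of the two sequences, thereby obtaining a single second-order recurrence for the surviving one; by symmetry the same computation will dispatch the other sequence. The relation to be reached is precisely the one attached to the characteristic polynomial $z^2-(a_1+b_2)z+(a_1b_2-a_2b_1)$ of the coefficient matrix $\left(\begin{smallmatrix}a_1&b_1\\a_2&b_2\end{smallmatrix}\right)$, whose trace is $a_1+b_2$ and whose determinant is $a_1b_2-a_2b_1$. This is the conceptual reason both sequences must inherit the very same recurrence.

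First I would derive the recurrence for $(x_i)$. Since $b_1\ne 0$, the first equation can be solved for $y_i$, giving $y_i=(x_{i+1}-a_1x_i)/b_1$, and shifting the index by one yields $y_{i+1}=(x_{i+2}-a_1x_{i+1})/b_1$. Substituting both expressions into the second equation $y_{i+1}=a_2x_i+b_2y_i$ and clearing the denominator $b_1$ removes $y$ entirely, leaving $x_{i+2}-a_1x_{i+1}=a_2b_1x_i+b_2x_{i+1}-a_1b_2x_i$, which rearranges to $x_{i+2}=(a_1+b_2)x_{i+1}+(-a_1b_2+a_2b_1)x_i$, exactly the claimed relation.

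For $(y_i)$ I would run the mirror-image elimination. Here the hypothesis $a_2\ne 0$ lets me solve the second equation for $x_i=(y_{i+1}-b_2y_i)/a_2$, shift to obtain $x_{i+1}$, and insert both into the first equation $x_{i+1}=a_1x_i+b_1y_i$. After multiplying through by $a_2$, the variable $x$ drops out and the same coefficients $a_1+b_2$ and $-a_1b_2+a_2b_1$ reappear, producing the identical second-order recurrence for $(y_i)$.

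Since both pieces are elementary algebra, I do not expect a genuine obstacle; the only point requiring care is the role of the hypothesis $a_2b_1\ne 0$, which is exactly what legitimizes both eliminations, as one needs $b_1\ne 0$ to isolate $y_i$ and $a_2\ne 0$ to isolate $x_i$. As a conceptual cross-check, and an alternative one-line route, I could instead invoke the Cayley–Hamilton theorem: the coefficient matrix annihilates its own characteristic polynomial, so applying that polynomial identity to the vector $(x_i,y_i)^{\top}$ yields the stated relation for both coordinates simultaneously, and this variant does not even require $a_2b_1\ne 0$.
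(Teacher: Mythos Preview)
Your argument is correct. The elimination is carried out cleanly, the hypothesis $a_2b_1\ne0$ is used exactly where it is needed, and the Cayley--Hamilton remark is a nice conceptual addendum.

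As for comparison with the paper: the paper does not actually prove this lemma. It is quoted from \cite{BNSz} (``Remark~1 linked to Lemma~4'' there) and stated without proof, serving only as a tool in the proof of Theorem~\ref{th:lr}. So there is no in-paper argument to compare against. That said, the elimination you perform is the standard derivation and almost certainly coincides with what the cited reference does; the Cayley--Hamilton route you mention is a genuine alternative that, as you note, dispenses with the condition $a_2b_1\ne0$ altogether.
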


\begin{proof}[Proof of Theorem~\ref{th:lr}]
Suppose that $v_1$ is the left ascendant of $u_1$.
By Figure \ref{fig:stepsleft-right}, which demonstrates the path precisely from $u_i$ to $u_{i+2}$ $(i\geq1)$ along vertices type $A$ in \hpt45, we gain the system of the recursive equations 
\begin{eqnarray}
u_{i+1} &=& (r+1)u_i+\left(\ell+r(\ell-1)\right)v_i,\label{eq:ui}\\
v_{i+1} &=& ru_i+\left(\ell+(r-1)(\ell-1)\right)v_i. \nonumber
\end{eqnarray}

Using Lemma \ref{lemma:2seq}  we receive that both $u_i$ and $v_i$ satisfy the equation  
\begin{equation*}
z_{i+2}=(\ell r+2) z_{i+1}-z_i.
\end{equation*}
If $\ell=r$, then we simply obtain
\begin{eqnarray}
U_{i+1} &=& U_i+\ell V_i,\label{eq:Ui}\\
V_{i+1} &=& U_i+(\ell-1)V_i.\nonumber
\end{eqnarray}
Now Lemma \ref{lemma:2seq} results  that $U_i$ and $V_i$ satisfy the equation
\begin{equation*}\label{eq:Xl}
Z_{i+2}=\ell Z_{i+1}+Z_i.
\end{equation*}
\end{proof}

\begin{figure}[h!] \centering
\scalebox{0.99}{ \begin{tikzpicture}[->,xscale=1.5,yscale=0.92, auto,swap]
  \node(a0) at (0,0)    {$\vdots $};
  \node (a1) at (0.5,-1)  [shape=rectangle,draw]{$v_i$};
  \node (a2) at (1,-2)  [shape=rectangle,draw,fill=red!30] {$u_i$};
  \node (a3) at (0.5,-3) {$u_i+v_i$};
  \node (a4) at (0,-4)   {$u_i+2v_i$};
  \node (a5) at (-0.5,-5)  {$\vdots $};
  \node (a6) at (-1,-6) [shape=rectangle,draw] {$V_j=u_i+(\ell-1)v_i$}; 
  \node (a7) at (-1.5,-7)  [shape=rectangle,draw,fill=red!30]{$U_j=u_i+\ell v_i$};  
  \node (a8) at (-1,-8) {$U_j+V_j$};
  \node (a9) at (-0.5,-9) {$U_j+2V_j$};   
  \node (a10) at (0,-10)  {$\vdots $};
  \node (a11) at (0.5,-11)  [shape=rectangle,draw] {$v_{i+1}=V_{j+1}=U_j+(r-1)V_j$}; 
  \node (a12) at (1,-12)  [shape=rectangle,draw,fill=red!30]{$u_{i+1}=U_{j+1}=U_j+r V_j$}; 
  
   \node (a13) at (0.5,-13) {$u_{i+1}+v_{i+1}$};
   \node (a14) at (0,-14)   {$u_{i+1}+2v_{i+1}$};
   \node (a15) at (-0.5,-15)  {$\vdots $};
   \node (a16) at (-1,-16) [shape=rectangle,draw] {$V_{j+2}=u_{i+1}+(\ell-1)v_{i+1}$}; 
   \node (a17) at (-1.5,-17)  [shape=rectangle,draw,fill=red!30]{$U_{j+2}=u_{i+1}+\ell v_{i+1}$};  
   \node (a18) at (-1,-18) {$U_{j+2}+V_{j+2}$};
   \node (a19) at (-0.5,-19) {$U_{j+2}+2V_{j+2}$};   
   \node (a20) at (0,-20)  {$\vdots $};
   \node (a21) at (0.5,-21) [shape=rectangle,draw] {$v_{i+2}=V_{j+3}=U_{j+2}+(r-1)V_{j+2}$}; 
   \node (a22) at (1,-22)  [shape=rectangle,draw,fill=red!30]{$u_{i+2}=U_{j+3}=U_{j+2}+r V_{j+2}$};   
         
  \path (a0) edge node {} (a1) [very thick];
  \path (a1) edge node {} (a2) [very thick];
  \path (a2) edge node {} (a3) [very thick];
  \path (a3) edge node {} (a4) [very thick];
  \path (a4) edge node {} (a5) [very thick];
  \path (a5) edge node {} (a6) [very thick];
  \path (a6) edge node {} (a7) [very thick];
  \path (a7) edge node {} (a8) [very thick];
  \path (a8) edge node {} (a9) [very thick];
  \path (a9) edge node {} (a10) [very thick];
  \path (a10) edge node {} (a11) [very thick];
  \path (a11) edge node {} (a12) [very thick];
  \path (a12) edge node {} (a13) [very thick];  
    \path (a13) edge node {} (a14) [very thick];
    \path (a14) edge node {} (a15) [very thick];
    \path (a15) edge node {} (a16) [very thick];
    \path (a16) edge node {} (a17) [very thick];
    \path (a17) edge node {} (a18) [very thick];
    \path (a18) edge node {} (a19) [very thick];
    \path (a19) edge node {} (a20) [very thick];
    \path (a20) edge node {} (a21) [very thick];
    \path (a21) edge node {} (a22) [very thick];
  \end{tikzpicture}}
  \caption{Path $L^{\ell}R^{r}$ from $u_{i}$ to $u_{i+2}$}
  \label{fig:stepsleft-right}
\end{figure}
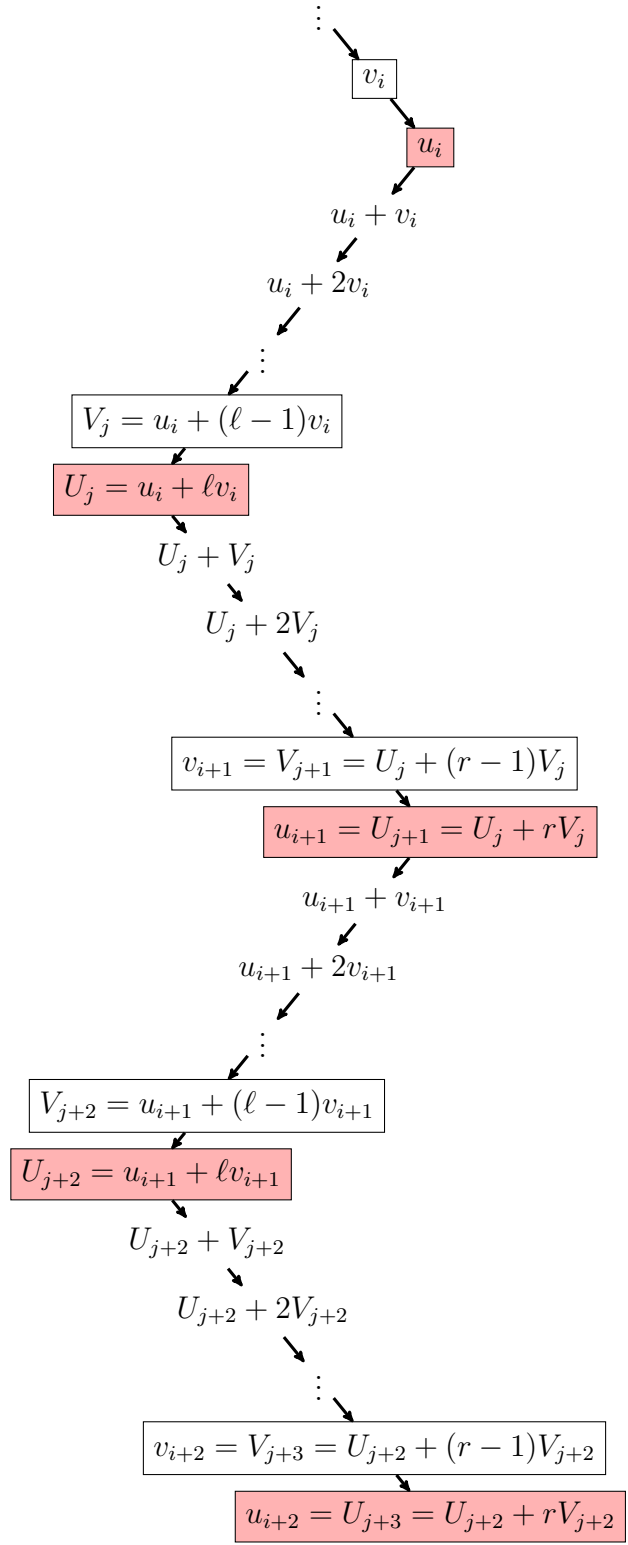

\begin{rem}
Let $\binomh{n}{k}=f_1$ be the initial element, and consider the path $L^\ell R^r$.
Since every second corner element of the path satisfies the recurrence equation~\eqref{seq:-} with $\alpha=\ell r+2$, the number of different paths belonging to different patterns but corresponding to the linear recurrence $(f_i)_{i=1}^\infty$ is the number of the divisors of $\ell r=\alpha-2$.
\end{rem}

Figure~\ref{fig:Pascal_4f-f} gives an example for the case when $\alpha-2=2=2\cdot1=1\cdot2$ and $u_1=f_1=2$, $u_2=f_2=7$. 
Clearly, the patters are $L^2R$ and $LR^2$.

\begin{figure}[h!]
 \centering
 \includegraphics[width=0.67\linewidth]{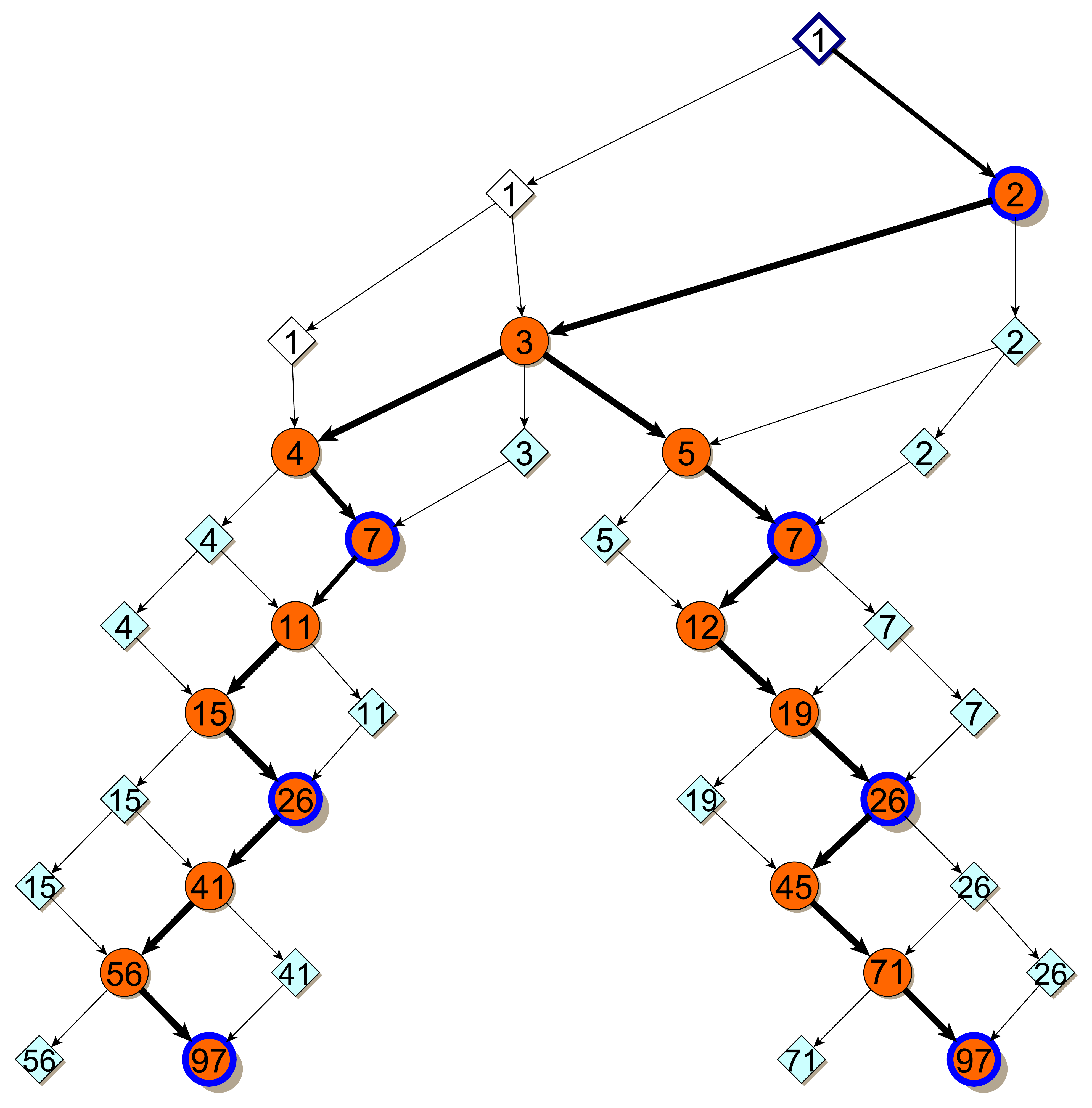}
 \caption{$f_0=1$, $f_1=2$, $f_{i}=4f_{i-1}-f_{i-2}$}
 \label{fig:Pascal_4f-f}
\end{figure}

Now we describe the intermediate sequences located in the path given by $v_1, \binomh{n}{k}=u_1$ and by $L^\ell R^r$. The labels of the elements having distance $(\ell+r)t$ $(t\in\mathbb{N})$ from the base element $\binomh{n}{k}$ are given by suitable sequences $\{w_i\}$.
\begin{theorem}
Put $w_i=u_{i}+mv_i$, where $0\leq m<\ell$, or let $w_i=U_{2i}+mV_{2i}=(m+1)u_{i}+(\ell+m(\ell-1))v_i$, where $0\leq m<r$. Then the terms of the sequence $(w_i)$ satisfy
\begin{equation*} \label{eq:w_lr}
w_{i}=(\ell r+2) w_{i-1}-w_{i-2},\qquad (i\geq3).
\end{equation*}
\end{theorem}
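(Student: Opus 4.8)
The plan is to recognize that the sequence $(w_i)$ is, in both cases, a fixed linear combination of the two basic sequences $(u_i)$ and $(v_i)$ (equivalently, of $(U_{2i})$ and $(V_{2i})$), and then to invoke the linearity of the recurrence satisfied by these building blocks. From the proof of Theorem~\ref{th:lr} we already know that both $(u_i)$ and $(v_i)$ satisfy the recurrence $z_{i+2}=(\ell r+2)z_{i+1}-z_i$; the same is therefore true of any constant-coefficient linear combination $\lambda u_i+\mu v_i$, since a linear recurrence is preserved under linear combinations of its solutions. So once I show that each $w_i$ in the statement has the form $\lambda u_i+\mu v_i$ with $\lambda,\mu$ independent of $i$, the conclusion is immediate.

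For the first family, $w_i=u_i+mv_i$ with $0\le m<\ell$ is already in that form with $\lambda=1$, $\mu=m$, so nothing more is needed there. For the second family I would first justify the displayed identity $U_{2i}+mV_{2i}=(m+1)u_i+(\ell+m(\ell-1))v_i$. This follows by reading off Figure~\ref{fig:stepsleft-right}: along the first $L$-run starting from $u_i$ (with left ascendant $v_i$), the $A$-type vertices are $u_i+jv_i$, so $V_{2i}=u_i+(\ell-1)v_i$ and $U_{2i}=u_i+\ell v_i$ (this is exactly the substitution made in equations~\eqref{eq:ui}, with $U_j=U_{2i}$, $V_j=V_{2i}$ in the notation of that figure). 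Hence $U_{2i}+mV_{2i}=(u_i+\ell v_i)+m(u_i+(\ell-1)v_i)=(m+1)u_i+(\ell+m(\ell-1))v_i$, which is again a constant-coefficient combination of $u_i$ and $v_i$. Therefore $(w_i)$ satisfies $w_{i}=(\ell r+2)w_{i-1}-w_{i-2}$ for $i\ge 3$, as claimed.

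The only genuine point to check — the part I expect to be the main obstacle, such as it is — is the geometric claim that the intermediate vertices at distance $(\ell+r)t$ from $\binomh{n}{k}$ along the path really are the vertices with labels $u_i+mv_i$ (for $0\le m<\ell$) and $U_{2i}+mV_{2i}$ (for $0\le m<r$), i.e., that these exhaust the vertices lying one full period down from a corner element. This is the same bookkeeping on Figure~\ref{fig:stepsleft-right} used to derive~\eqref{eq:ui}: after the $\ell$ left steps from $u_i=U_{2i-1}$ one reaches $U_{2i}$, and after the subsequent $r$ right steps one reaches $u_{i+1}=U_{2i+1}$; the vertices one period ($\ell+r$ steps) below $u_i$ are precisely those obtained by taking $\ell-m$ of the remaining $L$'s and then $m$... — more carefully, the vertices at distance exactly $\ell+r$ below $u_i$ split into those reached via the $L$-branch (labels $u_i+mv_i$, $0\le m<\ell$, plus the corner $U_{2i}$) and those reached further along via the $R$-branch (labels $U_{2i}+mV_{2i}$, $0\le m<r$), and each such vertex is a corner element of a shifted copy of the same path, so the whole argument above applies to it verbatim. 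With that identification in hand, the recurrence for $(w_i)$ is nothing more than the linearity observation of the first paragraph, and the proof concludes.
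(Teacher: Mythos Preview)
Your proposal is correct and follows essentially the same route as the paper: both arguments use that $(u_i)$ and $(v_i)$ satisfy the recurrence $z_{i+2}=(\ell r+2)z_{i+1}-z_i$ (established in the proof of Theorem~\ref{th:lr}) and then conclude by linearity that any fixed combination $\lambda u_i+\mu v_i$ does too. The paper simply writes out the linear-combination step explicitly for $w_i=u_i+mv_i$ and declares the second case analogous, whereas you spell out the identity $U_{2i}+mV_{2i}=(m+1)u_i+(\ell+m(\ell-1))v_i$ and add (somewhat more than necessary) geometric commentary; but the mathematical content is the same.
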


\begin{proof} Consider again Figure \ref{fig:stepsleft-right} to show the statement for the first type of sequences. One can observe the labels of the path described 
by $w_i=u_{i}+mv_i$, where $0\leq m<\ell$ and $i\geq1$.  From \eqref{eq:lr} we see 
\begin{eqnarray*}
w_{i+2}&=&u_{i+2}+m v_{i+2} =(\ell r+2) (u_{i+1}+k v_{i+1})-(u_i+m v_{i})\\
&=&(\ell r+2) w_{i+1}-w_i.
\end{eqnarray*}
The second part of the proof is analoguous. In Figure  \ref{fig:stepsleft-right} the equation $j=2i$ holds, but generally it does not. 
\end{proof}

\begin{cor}\label{cor:W}
In case of $\ell=r$, $W_j=U_j+mV_j$ $(0\leq m<\ell)$ satisfy the equation
\begin{equation}\label{eq:Wl}
W_{j}=\ell W_{j-1}+W_{j-2},\qquad (j\geq3).
\end{equation}
\end{cor}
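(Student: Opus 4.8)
The plan is to read off Corollary~\ref{cor:W} as the specialisation $\ell=r$ of the theorem just proved, the only real ingredient being the elementary fact that the solution set of a fixed homogeneous linear recurrence is closed under linear combinations. Recall that in the proof of Theorem~\ref{th:lr} the case $\ell=r$ produced the system~\eqref{eq:Ui}, that is $U_{j+1}=U_j+\ell V_j$ and $V_{j+1}=U_j+(\ell-1)V_j$, and that Lemma~\ref{lemma:2seq} then forced both $(U_j)$ and $(V_j)$ to satisfy $Z_{j+2}=\ell Z_{j+1}+Z_j$.

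The first (and, I expect, cleanest) step is then purely formal: writing $W_j=U_j+mV_j$ and substituting the two recurrences above gives
\begin{align*}
W_j &= U_j+mV_j = (\ell U_{j-1}+U_{j-2})+m(\ell V_{j-1}+V_{j-2})\\
    &= \ell\,(U_{j-1}+mV_{j-1})+(U_{j-2}+mV_{j-2}) = \ell W_{j-1}+W_{j-2},
\end{align*}
which is exactly~\eqref{eq:Wl}. I would stress here that this computation holds for every $m$; the hypothesis $0\le m<\ell$ plays no algebraic role and only guarantees, via Figure~\ref{fig:stepsleft-right}, that $W_j$ is genuinely the label of a vertex lying on the path $L^{\ell}R^{r}L^{\ell}R^{r}\cdots$ rather than a formal combination of labels.

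As an alternative I would mention a route that does not quote the scalar recurrences for $(U_j)$ and $(V_j)$ but re-applies Lemma~\ref{lemma:2seq} to the pair $(W_j,V_j)$: substituting $U_j=W_j-mV_j$ into~\eqref{eq:Ui} yields $W_{j+1}=(m+1)W_j+\bigl(\ell(m+1)-m(m+2)\bigr)V_j$ and $V_{j+1}=W_j+(\ell-1-m)V_j$, and a short check shows that the coefficients of the lemma satisfy $a_1+b_2=\ell$ and $-a_1b_2+a_2b_1=1$, which again gives~\eqref{eq:Wl}. On this route the single point that genuinely requires attention is the non-degeneracy hypothesis $a_2b_1\ne0$: here $a_2b_1=\ell(m+1)-m(m+2)\ge(m+1)^2-m(m+2)=1>0$, using that $m+1\le\ell$ (which holds since $m$ is a non-negative integer with $m<\ell$).

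In summary, there is no substantial obstacle. Once Theorem~\ref{th:lr} is available the corollary is essentially a one-line consequence of linearity, so I would present the short computation above as the proof and relegate the Lemma-based variant to a remark.
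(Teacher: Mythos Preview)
Your argument is correct and is precisely the one the paper has in mind: the corollary carries no separate proof in the paper, and the intended justification is exactly your linearity computation---since $(U_j)$ and $(V_j)$ both satisfy $Z_{j+2}=\ell Z_{j+1}+Z_j$ by the $\ell=r$ case of Theorem~\ref{th:lr}, so does $W_j=U_j+mV_j$. One wording caveat: your opening line calls this ``the specialisation $\ell=r$ of the theorem just proved'', but the theorem immediately preceding the corollary yields $w_i=(\ell^2+2)w_{i-1}-w_{i-2}$ when $\ell=r$, not~\eqref{eq:Wl}; the relevant input is the second part of Theorem~\ref{th:lr}, which you in fact use.
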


In Figure~\ref{fig:Pascal_3f+f}, according to Corollary~\ref{cor:W} we give two examples for the representation of elements of recurrence sequence $f_{i}=3f_{i-1}+f_{i-2}$. The pattern of both paths is $R^3L^3$, moreover, $u_1=3$, $v_1=2$, $m=2$ and $u_1=4$, $v_1=3$, $m=1$, respectively. 

\begin{figure}[h!]
 \centering
 \includegraphics[width=0.55\linewidth]{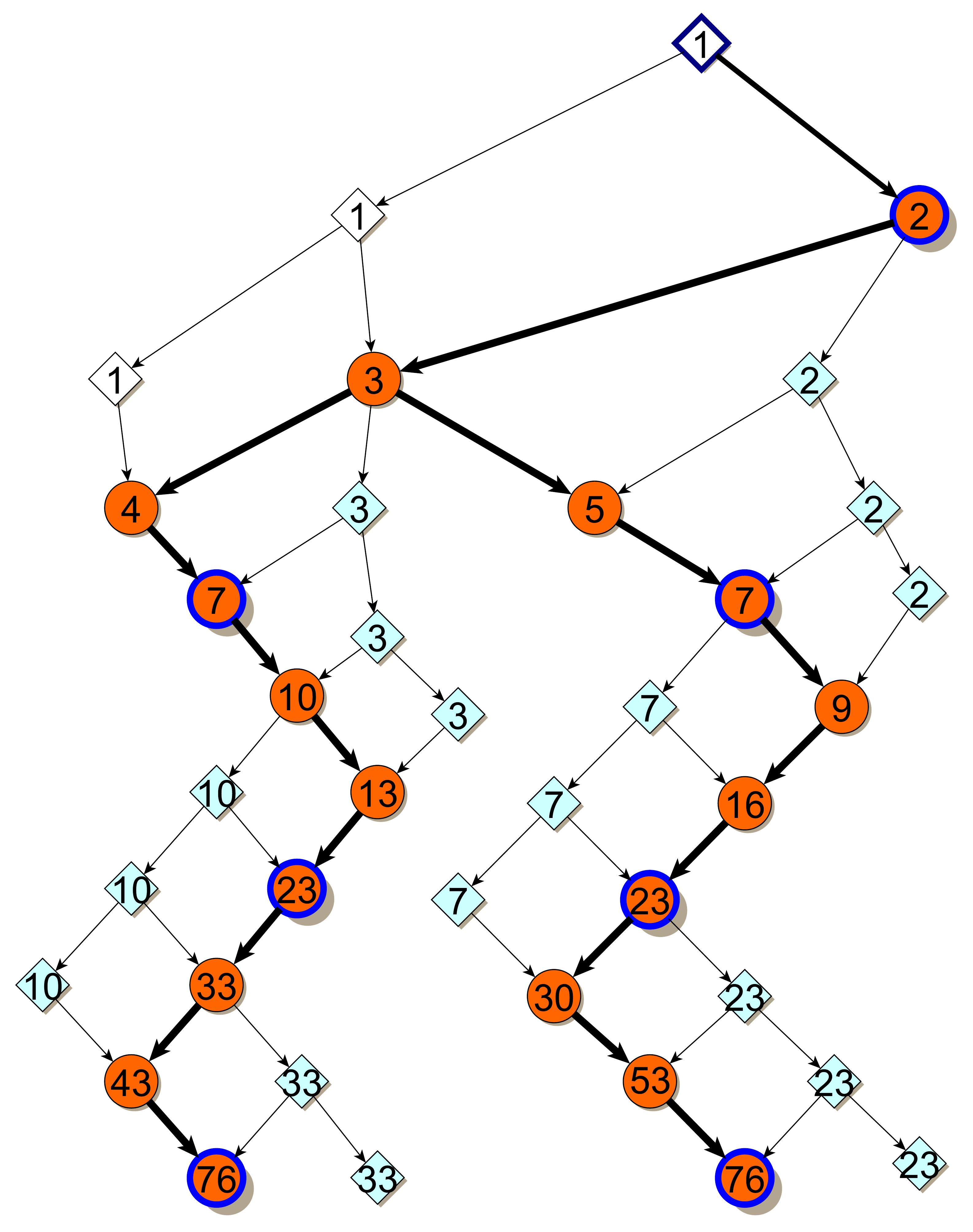}
 \caption{$f_0=1$, $f_1=2$, $f_{i}=3f_{i-1}+f_{i-2}$}
 \label{fig:Pascal_3f+f}
\end{figure}

\begin{theorem}\label{th:back}
Consider the sequence \eqref{seq:-}. If  $\ell r=\alpha-2$, and 
 $$m=\frac{f_{j+1}-(r+1)f_{j}}{\ell+r(\ell-1)}$$
is an integer for some $j\ge1$, further $m<f_j$ holds, then the elements $f_i$ $(i\geq j)$ can be represented in \hpt45 by every second corner elements of a paths given by the the pattern $L^{\ell}R^r$, and by $u_1=f_j$ and $v_1=m$.  
\end{theorem}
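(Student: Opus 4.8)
The plan is to run the construction of Theorem~\ref{th:lr} in reverse: instead of starting from a prescribed $A$-vertex and reading off the recurrence satisfied by every second corner element, we want to recognise a given tail $(f_i)_{i\ge j}$ of the sequence \eqref{seq:-} as precisely such a sequence of corner labels, which forces us to pin down the correct starting vertex and its left ascendant. First I would invoke Theorem~\ref{thm:uv} (or rather the fact, already used in the proof of Theorem~\ref{thm:seq01}, that for any positive integers there is an $A$-vertex with them as a left-descending neighbour pair) to place an $A$-type vertex with label $u_1=f_j$ whose left ascendant has label $v_1=m$; this requires $m\in\mathbb{N}^+$ and $m<f_j$, exactly the hypotheses imposed. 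The content of ``$m$ is an integer'' is that the displayed formula for $m$ is just equation~\eqref{eq:ui} solved for $v_i$: from $u_{i+1}=(r+1)u_i+(\ell+r(\ell-1))v_i$ with $i=1$, $u_1=f_j$, $u_2=f_{j+1}$ we get $v_1=(f_{j+1}-(r+1)f_j)/(\ell+r(\ell-1))=m$, so the hypothesis is precisely what makes the initial data of the $L^\ell R^r$-recursion legitimate.

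Next I would argue that once $u_1=f_j$, $v_1=m$ are fixed and we follow the path $L^\ell R^r$ repeatedly (staying on $A$-vertices, as Figure~\ref{fig:stepsleft-right} shows is possible), the sequence $(u_i)$ of every second corner elements satisfies, by Theorem~\ref{th:lr}, the recurrence $u_{i+2}=(\ell r+2)u_{i+1}-u_i=\alpha u_{i+1}-u_i$ since $\ell r=\alpha-2$. Thus $(u_i)$ and $(f_{j+i-1})$ obey the same second-order linear recurrence \eqref{seq:-}; it remains only to check they have the same two initial terms, $u_1=f_j$ (by choice) and $u_2=f_{j+1}$. For $u_2$ I would substitute $v_1=m$ back into \eqref{eq:ui}: $u_2=(r+1)f_j+(\ell+r(\ell-1))m=(r+1)f_j+(f_{j+1}-(r+1)f_j)=f_{j+1}$. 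Since a second-order linear recurrence is determined by two consecutive terms, $u_i=f_{j+i-1}$ for all $i\ge1$, which is the assertion.

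The only genuine obstacle is the legitimacy of the geometric placement, i.e.\ making sure that the reversed reading corresponds to an honest path in \hpt45: we need an $A$-vertex with left ascendant labelled $m$ and with label $f_j$, and we need the whole infinite path $L^\ell R^r L^\ell R^r\cdots$ emanating from it to consist of $A$-vertices with the labels dictated by Figure~\ref{fig:stepsleft-right}. The first point is handled by Theorem~\ref{thm:uv} together with Corollary~\ref{cor:uv}/Remark~\ref{rem1} (which guarantee that a pair $m<f_j$ sits with $f_j$ of type $A$, $m$ to its upper-left), using $m\ge1$ and $m<f_j$; the inequality $m<f_j$ is exactly what lets us realise $m$ and $f_j$ as an adjacent pair with $f_j-m$ on the other side. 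The second point is just the forward construction of Theorem~\ref{th:lr}, already established, read off Figure~\ref{fig:stepsleft-right}: every vertex appearing there is of type $A$ where marked, and the label bookkeeping is identical. So modulo citing these earlier results the argument is a short verification of two initial values plus a recurrence, and no new calculation beyond what appears in the proof of Theorem~\ref{th:lr} is needed.
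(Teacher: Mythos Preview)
Your proposal is correct and follows essentially the same approach as the paper's own proof: set $u_1=f_j$, $u_2=f_{j+1}$, solve \eqref{eq:ui} for $v_1=m$, invoke Theorem~\ref{thm:uv} to place the pair $(m,f_j)$ in \hpt45, and then let Theorem~\ref{th:lr} carry the recurrence forward. The paper compresses all of this into three sentences, whereas you spell out the matching of initial conditions and the geometric legitimacy of the path more carefully (including the implicit need for $m\ge1$), but the underlying argument is identical.
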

\begin{proof}
Let $u_1=f_j$ and $u_2=f_{j+1}$. Then  equation \eqref{eq:ui} yields   $v_1=(f_{j+1}-(r+1)f_j)/(\ell+r(\ell-1))$. Since the integers $v_1$ and $u_1$ are neigbours in a suitable row of \hpt45, therefore there is a path with the pattern $L^{\ell}R^r$ from $v_1$ and $u_1$ such that  every second corner elements are $f_{i+1}$ $(i\geq j)$.  
\end{proof}
\smallskip

Figure~\ref{fig:Pascal_4f-f} gives examples on the paths of $f_{i}=4f_{i-1}-f_{i-2}$ with initial elements $u_1=f_2=7$ and $u_2=f_3=26$,  moreover $v_1=4$ and $v_1=5$, respectively, where $\alpha-2=2=2\cdot1=1\cdot2=lr$, and the patters are $L^2R$ and $LR^2$.

\begin{theorem}
Consider now the sequence \eqref{seq:+}. If  ${\ell}^2=\eta-2$, and $m=(f_{j+1}-f_j)/\ell$ is an 
integer, further $m<f_j$, then the elements $f_i$ $(i\geq j\geq 1)$ can be represented in \hpt45 by every corner elements of the paths given by the pattern $L^{\ell}R^r$, and by $u_1=f_j$ and $v_1=m$.
\end{theorem}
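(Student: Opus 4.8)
The plan is to mirror the proof of Theorem~\ref{th:back} exactly, but now exploit the \emph{every corner element} clause (rather than every second corner element) which is the hallmark of the $\ell=r$ situation analysed in Theorem~\ref{th:lr} and Corollary~\ref{cor:W}. Recall that Theorem~\ref{th:lr} shows: if $\ell=r$, then the \emph{consecutive} corner elements $U_j$ satisfy $U_{i+1}=\ell U_i+U_{i-1}$, i.e.\ the ``$+$''-type recurrence \eqref{seq:+} with $\eta=\ell+ \text{(something)}$. The hypothesis here is ${\ell}^2=\eta-2$, so I must first pin down the precise relation between the pattern parameter $\ell$ and the recurrence coefficient $\eta$, and confirm that the condition $\ell=r$ is forced. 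Indeed $\ell r=\eta-2={\ell}^2$ gives $r=\ell$, so the pattern $L^{\ell}R^r$ is in fact $L^{\ell}R^{\ell}$, and we are squarely in the $\ell=r$ regime of Theorem~\ref{th:lr} and Corollary~\ref{cor:W}.

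First I would set $u_1=f_j$ and $u_2=f_{j+1}$, and use the relation \eqref{eq:ui} from the proof of Theorem~\ref{th:lr} to solve for $v_1$. When $\ell=r$, equation \eqref{eq:ui} specializes to $u_{2}=(\ell+1)u_1+(\ell+\ell(\ell-1))v_1=(\ell+1)f_j+{\ell}^2 v_1$; however, the cleaner route is through the $U$--$V$ system \eqref{eq:Ui}, namely $U_{i+1}=U_i+\ell V_i$, which upon setting $U_1=f_j$ and $U_2=f_{j+1}$ yields $V_1=(f_{j+1}-f_j)/\ell=m$. This is exactly the quantity $m$ appearing in the statement, so the hypothesis that $m$ is an integer is precisely the condition guaranteeing that $v_1=m$ is a legitimate integer label. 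I would record that $v_1=m<f_j=u_1$, which by Remark~\ref{rem1} (or Corollary~\ref{cor:uv}) ensures that $u_1$ has type $A$ and that $v_1$ and $u_1$ can sit as neighbours in a suitable row of \hpt45.

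Next I would invoke Theorem~\ref{thm:uv} to place $v_1=m$ and $u_1=f_j$ side by side as neighbours in some row of \hpt45, with $u_1$ of type $A$ and $v_1$ its left ascendant (as in the setup of Theorem~\ref{th:lr}). Starting from this configuration and following the pattern $L^{\ell}R^{\ell}$, Theorem~\ref{th:lr} with $\ell=r$ guarantees that the consecutive corner elements $U_i$ obey $U_{i+1}=\ell U_i+U_{i-1}$, which is \eqref{seq:+} with $\eta=\ell+\cdots$; more to the point, the same argument shows the corner elements are generated by the linear map whose characteristic recurrence has coefficient $\eta={\ell}^2+2$, matching our hypothesis. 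Since the first two corner elements are $U_1=f_j$ and $U_2=f_{j+1}$ and both $(U_i)$ and $(f_i)_{i\ge j}$ satisfy the \emph{same} order-two recurrence \eqref{seq:+} with the \emph{same} two initial terms, induction forces $U_i=f_{j+i-1}$ for all $i\ge1$. Thus the terms $f_i$ $(i\ge j)$ appear as the successive corner elements of the path, as claimed.

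The main obstacle I anticipate is the bookkeeping that reconciles the stated recurrence \eqref{seq:+} (which is $f_i=\eta f_{i-1}+f_{i-2}$ with $\eta={\ell}^2+2$) against the corner-element recurrence $U_{i+1}=\ell U_i+U_{i-1}$ extracted from Theorem~\ref{th:lr}; these are consistent only if the indexing of the ``corner elements'' and the meaning of $\eta$ are tracked carefully, because Theorem~\ref{th:lr} gives the $U$-recurrence with coefficient $\ell$ whereas the $u$-recurrence (every \emph{second} corner element) carries coefficient $\ell r+2=\eta$. The delicate point is therefore to verify that ``every corner element'' is the correct granularity for \eqref{seq:+}: one must check that stepping by a full block $L^{\ell}R^{\ell}$ advances the $f$-index by two (matching $u_i=U_{2i-1}$), while stepping by a single monochromatic run $L^\ell$ or $R^\ell$ advances it by one, so that the consecutive $U_j$ realize the full sequence $(f_i)$ and not merely its even- or odd-indexed subsequence. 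Once this correspondence is fixed, the remaining verifications are the same routine substitutions already carried out in the proofs of Theorem~\ref{th:back} and Corollary~\ref{cor:W}.
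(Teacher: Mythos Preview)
Your approach is essentially the same as the paper's: set $U_1=f_j$, $U_2=f_{j+1}$, and invert equation~\eqref{eq:Ui} to obtain $V_1=(f_{j+1}-f_j)/\ell=m$, then appeal to the neighbour placement and Theorem~\ref{th:lr} with $\ell=r$. The paper's own proof consists of exactly these two sentences (citing \eqref{eq:Ui} and declaring the argument analogous to Theorem~\ref{th:back}); your write-up merely fills in the details the paper leaves implicit. The concern you raise in your final paragraph about reconciling the coefficient $\ell$ in the $U$-recurrence with $\eta=\ell^{2}+2$ in \eqref{seq:+} is not addressed in the paper's proof either, so you are not missing an idea that the paper supplies.
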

\begin{proof}
The proof is similar to the proof of Theorem \ref{th:back}. 
Using \eqref{eq:Ui}, from $u_1=f_j$ and $u_2=f_{j+1}$   we gain  $v_1=(f_{j+1}-f_j)/{\ell}$.
\end{proof}

\end{document}